\newtheorem{theorem}{Theorem}[section]
\newtheorem{lemma}[theorem]{Lemma}
\theoremstyle{definition}
\newtheorem{definition}[theorem]{Definition}
\newtheorem{example}[theorem]{Example}
\theoremstyle{remark}
\newtheorem{remark}[theorem]{Remark}
\numberwithin{equation}{section}
\def\ch{\raise 0.5ex \hbox{$\chi$}}
\let\epsilon\varepsilon
\begin{document}

\title[Convergence of Random Products of Projections]{Convergence of Random Products of Countably Infinitely Many Projections}

\author[R. Eskandari]{Rasoul Eskandari}

\address{Department of Mathematics Education, Farhangian University, P.O. Box 14665-889, Tehran, Iran.}
\email{Rasul.eskandari@cfu.ac.ir, eskandarirasoul@yahoo.com}

\author[M. S. Moslehian]{Mohammad Sal Moslehian$^*$}

\address{Department of Pure Mathematics, Faculty of Mathematical Sciences, Ferdowsi University of Mashhad, P. O. Box 1159, Mashhad 91775, Iran}
\email{moslehian@um.ac.ir; moslehian@yahoo.com}

\renewcommand{\subjclassname}{\textup{2020} Mathematics Subject Classification}
\subjclass[]{46C05; 47A05; 47B02}
\keywords{Iterated sequence; random product of projections; pseudo-periodic function; strong convergence.}
 \thanks{$^*$Corresponding author}

\begin{abstract}
	
Let $r \in \mathbb{N}\cup\{\infty\}$ be a fixed number and let $P_j\,\, (1 \leq j\leq r )$ be the projection onto the closed subspace $\mathcal{M}_j$ of $\mathscr{H}$. We are interested in studying the sequence $P_{i_1}, P_{i_2}, \ldots \in\{P_1, \ldots, P_r\}$. A significant problem is to demonstrate conditions under which the sequence $\{P_{i_n}\cdots P_{i_2}P_{i_1}x\}_{n=1}^\infty$ converges strongly or weakly to $Px$ for any $x\in\mathscr{H}$, where $P$ is the projection onto the intersection $\mathcal{M}=\mathcal{M}_1\cap \ldots \cap \mathcal{M}_r$.
Several mathematicians have presented their insights on this matter since von Neumann established his result in the case of $r=2$. In this paper, we give an affirmative answer to a question posed by M. Sakai. We present a result concerning random products of countably infinitely many projections (the case $r=\infty$) incorporating the notion of pseudo-periodic function. 
	
\end{abstract}

\maketitle


\section{Introduction}
Throughout this note, let $\mathbb{B}(\mathscr{H})$ stand for the algebra of all bounded linear operators acting on a Hilbert space $(\mathscr{H},\langle \cdot, \cdot \rangle)$. The identity operator is denoted by $I$. By an (orthogonal) projection we mean an operator $P\in\mathbb{B}(\mathscr{H})$ such that $P^2=P=P^*$. As usual, $\mathscr{M}^\perp$ stands for the orthogonal complement of a (closed) subspace $\mathscr{M}$. The range and kernel of any operator $T$ are denoted by $\mathcal{R}(T)$ and $\mathcal{N}(T)$, respectively. In this context, we examine the convergence of a sequence $\{x_n\}$ in $\mathscr{H}$ strongly (that is, in norm) and weakly (that is, there exists some $x\in\mathscr{H}$ such that $\{\langle x_n,y\rangle\}$ converges to $\langle x,y\rangle$ for all $y\in\mathscr{H}$).

Let $r \in \mathbb{N}\cup\{\infty\}$ be a fixed number and let $P_j\,\, (1 \leq j\leq r )$ be the projection onto a closed subspace $\mathscr{M}_j$ of $\mathscr{H}$. Consider the sequence $P_{i_1}, P_{i_2}, \ldots \in\{P_1, \ldots, P_r\}$. An interesting problem is posed as follows:

\textbf{Problem.} Under what conditions does the sequence $\{P_{i_n}\cdots P_{i_2}P_{i_1}x\}_{n=1}^\infty$ strongly or weakly converge to $Px$ for any vector $x\in\mathscr{H}$, where $P$ denotes the projection onto the intersection $\mathscr{M}=\mathscr{M}_1\cap \ldots \cap \mathscr{M}_r$?

For $r=3$, the sequence of iterates defined as $x_n=P_{i_n}x_{n-1}$ is illustrated in Figure \ref{fig1}.

\begin{figure}[h!]
	\centering
	\begin{tabular}{c}
		\includegraphics[width=0.7\textwidth]{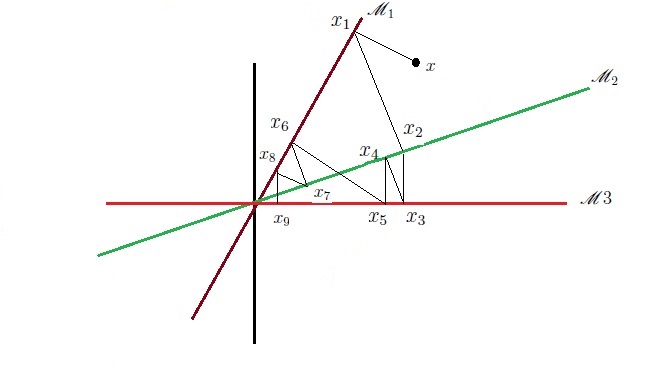}
	\end{tabular}
	\caption{}\label{fig1}
\end{figure}

This problem is inherently complicated, in particular, in the case where $r=\infty$; {\color{blue}see \cite{DYE, DYE2, SAK}}. 

{\color{blue} When $r\in \mathbb{N}$, Amemiya} and Ando \cite{AA} proved that the sequence $\{P_{i_n}\cdots P_{i_2}P_{i_1}x\}_{n=1}^\infty$ converges weakly to $Px$ for any vector $x\in\mathscr{H}$. They conjectured that their result is also valid if we replace ``weakly'' with ``strongly". 

For $r\geq 3$, the conjecture was already true if $\mathscr{H}$ is finite-dimensional; as shown in Pr\'{a}ger \cite{PRA}. 

Halperin \cite[Theorem 1]{N} established that if $Q_i$'s are chosen in such a way that $P_{i_{rn}}\cdots P_{i_2}P_{i_1}=(P_r\cdots P_2P_1)^{n}$ for all $n$, then the subsequence $\{P_{i_{rn}}\cdots P_{i_2}P_{i_1}x\}_{n=1}^\infty$ of $\{P_{i_n}\cdots P_{i_2}P_{i_1}x\}_{n=1}^\infty$ strongly converges to $Px$. However, the convergence $$\lim_n\|(P_1\cdots P_n)^nx-Px\|=0$$ may not be uniform on bounded subsets of initial points $x$. Many mathematicians are working on the rate of convergence; see \cite{BAD} and references therein. Sakai \cite{SAK} extended the Halperin result for quasi-periodic functions and a finite number of projections. He then posed a question of whether his results are still valid for a larger class than quasi-periodic functions or countably infinitely many projections. In \cite{MDD}, the authors addressed Sakai's question for a broader class beyond quasi-periodic functions, which are called {\color{blue}quasi-periodic} sequences; {\color{blue}see \cite{THI, THI2}}. In this paper, we explore the convergence of non-periodic products of projections, a topic which is also examined in various works {\color{blue}like \cite{PUS, PUS2}}.

The case $r=2$ was proved much earlier by von Neumann \cite{VON}. Indeed, he {\color{blue}established} his {\color{blue}well known} alternating projection theorem by showing that if $P_1$ and $P_2$ are projections, then the sequence $P_1x, P_2P_1x, P_1P_2P_1x, \ldots$ converges strongly to $Px$ in which $P$ is the projection onto the intersection of their corresponding closed subspaces. {\color{blue}Simple geometric proofs of von Neumann's theorem were obtained by  Kopeck\'{a} and Reich in \cite{KR1, KR2}}.

However, Paszkiewicz \cite{PAS} (with $r=5$) as well as Kopeck\'{a} and V. M\"{u}ller \cite{KM} (with $r=3$) gave counterexamples for the Amemiya--Ando conjecture. In general, the behavior of projections can be strange as shown by Kopeck\'{a} and Paszkiewicz \cite{KP}. If $\mathscr{H}$ is an infinite-dimensional Hilbert space, then there exist three projections $P_1, P_2$, and $P_3$ onto closed subspaces of $\mathscr{H}$ such that for every nonzero vector $x\in\mathscr{H}$, there exist integers $k_1,k_2, \ldots \in \{1,2,3\}$ such that $\{P_{k_n},\cdots P_{k_2}P_{k_1}x\}$ does not converge strongly.

Variations of this problem have applications in multiple disciplines such as partial differential equations, approximation theory, biomathematics, and computer science; see \cite{DEU} and references therein.

In this paper, we provide a positive answer to Sakai's question by presenting a result concerning random products of an infinite number of projections that involve a general notion of quasi-periodic functions, namely pseudo-periodic functions, which differ from {\color{blue} quasi-periodic functions}. We hope that this insight can contribute to solving the problem. We also provide concrete examples to clarify our results.


\section{Main results}

We start our work with recalling the following notion {\color{blue}appearing in \cite[p. 206]{DYE2} and \cite{SAK}}.

\begin{definition}\label{qp}
Let $r\in \mathbb{N}$. A function $\sigma: \mathbb{N} \to \{1, 2, \ldots, r\}$ is called \emph{quasi-periodic} if there exists an integer $m\geq r$ such that for each integer $k\geq 0$, it holds that
	\begin{align*}
		\{\sigma(k+1), \sigma(k+2), \ldots, \sigma(k+m)\}= \{1, 2, \ldots, r\},
	\end{align*}
	or equivalently, for each $j$, the inverse image of $\{j\}$ under $\sigma$ is an infinite set, and if
	$\{l_{n}\}_{n=1}^\infty$ is the increasing sequence of all natural numbers such that $\sigma(l_{n})=j$, then
	\begin{align*}
		I(\sigma,j):=\sup_n \left(l_{n}-l_{n-1}\right)<\infty\,,
	\end{align*}
	where $l_{0}=0$.
\end{definition}

The following is an extension of Halperin's result.
\begin{theorem}\cite[Theorem]{SAK}\label{sakamain}
	Let $r$ be a positive integer, let $P_1,\ldots, P_r\in \mathbb{B}(\mathscr{H})$ be projections onto closed subspaces $\mathscr{M}_1,\ldots, \mathscr{M}_r$ of $\mathscr{H}$, respectively, and let $P$ be the projection onto $\bigcap_{i=1}^r\mathscr{M}_i$ . Let $\sigma:\mathbb{N}\to\{1,\ldots, r\}$ be quasi-periodic and set $T_1:=P_{\sigma(1)}$ and $T_n:=P_{\sigma(n)}T_{n-1}$. Then, $\{T_nx\}$ strongly converges to $Px$. Furthermore,
	\[
	\|T_nx-T_mx\|^2\leq \big((b-1)(b-2)+3\big)\sum_{k=m}^{n-1}\|T_{k+1}x-T_kx\|^2\,,
	\]
	where $x\in \mathscr{H}$, $b=\max_{1\leq j\leq r}I(\sigma,j)$ and $n> m\geq 1$. 
\end{theorem}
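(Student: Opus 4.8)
The plan is to establish the two assertions in sequence, since the strong convergence to $Px$ will follow once the quantitative estimate is in hand, and that estimate is the real content.

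First I would record the elementary structural facts. Each $P_{\sigma(n)}$ is a contraction, so the scalar sequence $\{\|T_nx\|\}$ is non-increasing and therefore convergent. The engine throughout is the Pythagorean identity for a projection $Q$, namely $\|Qv-v\|^2=\|v\|^2-\|Qv\|^2$ (because $v-Qv\perp Qv$). Applying it with $Q=P_{\sigma(k+1)}$ and $v=T_kx$ gives
\[
\|T_{k+1}x-T_kx\|^2=\|T_kx\|^2-\|T_{k+1}x\|^2 ,
\]
so the right-hand sum telescopes to $\sum_{k=m}^{n-1}\|T_{k+1}x-T_kx\|^2=\|T_mx\|^2-\|T_nx\|^2$. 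In particular $\sum_{k\geq 1}\|T_{k+1}x-T_kx\|^2<\infty$, and its tails tend to $0$.

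Next comes the estimate itself, which I regard as the crux. Writing $d_k:=T_{k+1}x-T_kx=-(I-P_{\sigma(k+1)})T_kx$, one has $d_k\perp\mathscr{M}_{\sigma(k+1)}$ and $\langle d_k,T_{k+1}x\rangle=0$. Expanding the square,
\[
\|T_nx-T_mx\|^2=\sum_{k=m}^{n-1}\|d_k\|^2+2\,\Re\!\!\sum_{m\leq k<l\leq n-1}\langle d_k,d_l\rangle ,
\]
and for each fixed $k$ the inner tail telescopes: $\sum_{l>k}\langle d_k,d_l\rangle=\langle d_k,T_nx-T_{k+1}x\rangle=\langle d_k,T_nx\rangle$. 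Since $d_k\perp\mathscr{M}_{\sigma(k+1)}$, this equals $\langle d_k,(I-P_{\sigma(k+1)})T_nx\rangle$, so the whole cross term is governed by how far $T_nx$ lies from each $\mathscr{M}_{\sigma(k+1)}$. Here quasi-periodicity enters decisively: the bound $I(\sigma,j)\leq b$ forces every symbol to recur within any window of length $b$, so there is an index $q$ with $\sigma(q)=\sigma(k+1)$, $T_qx\in\mathscr{M}_{\sigma(k+1)}$ and $n-q\leq b-1$; the nearest-point property of the projection then yields $\|(I-P_{\sigma(k+1)})T_nx\|\leq\|T_nx-T_qx\|\leq\sum_{s=q}^{n-1}\|d_s\|$, a sum over at most $b-1$ of the most recent increments. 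Substituting this back and controlling the resulting interactions by Cauchy--Schwarz, while observing that each squared increment $\|d_s\|^2$ is reused at most $b-1$ times, is what forces a constant quadratic in $b$. I expect the genuine obstacle to lie precisely in this bookkeeping: one must arrange the cancellations so that the long-range coupling with the endpoint does not generate an uncontrolled factor $\sum_k\|d_k\|$, and then trim the count of genuinely interacting pairs down to the recurrence window so that the constant sharpens to exactly $(b-1)(b-2)+3$.

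Finally I would deduce convergence and identify the limit. Granting the estimate, summability of $\sum_k\|d_k\|^2$ makes $\{T_nx\}$ Cauchy, hence strongly convergent to some $y_\infty$. To see $y_\infty=Px$, I argue that $y_\infty\in\mathscr{M}$ and $x-y_\infty\in\mathscr{M}^\perp$. For the first, fix $j$; by recurrence there is a subsequence with $\sigma(l_p)=j$, along which $T_{l_p}x\in\mathscr{M}_j$, and since $\mathscr{M}_j$ is closed and $T_{l_p}x\to y_\infty$ we get $y_\infty\in\mathscr{M}_j$, whence $y_\infty\in\bigcap_j\mathscr{M}_j=\mathscr{M}$. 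For the second, any $z\in\mathscr{M}$ satisfies $P_{\sigma(k+1)}z=z$, so $\langle T_kx-T_{k+1}x,z\rangle=\langle T_kx,(I-P_{\sigma(k+1)})z\rangle=0$; summing over $k$ gives $\langle x-T_nx,z\rangle=0$ for every $n$, and letting $n\to\infty$ yields $\langle x-y_\infty,z\rangle=0$. Thus $x=y_\infty+(x-y_\infty)$ with $y_\infty\in\mathscr{M}$ and $x-y_\infty\in\mathscr{M}^\perp$, that is, $y_\infty=Px$, which completes both assertions.
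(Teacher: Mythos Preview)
The paper does not supply its own proof of Theorem~\ref{sakamain}: the result is quoted from Sakai~\cite{SAK} and used only as a tool (in the proof of Lemma~\ref{lemma lim}). So there is no in-paper argument to compare against, and what follows evaluates your sketch on its own merits.

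Your treatment of the telescoping identity $\|d_k\|^2=\|T_kx\|^2-\|T_{k+1}x\|^2$ and of the identification of the strong limit as $Px$ is correct (with the minor caveat that the final orthogonality sum should start at $k=0$ with the convention $T_0x:=x$). The genuine gap is in the quantitative inequality, exactly where you flag the ``bookkeeping''. Your reduction
\[
\sum_{l=k+1}^{n-1}\langle d_k,d_l\rangle=\langle d_k,T_nx\rangle=\langle d_k,(I-P_{\sigma(k+1)})T_nx\rangle
\]
and the nearest-point bound $\|(I-P_{\sigma(k+1)})T_nx\|\le\sum_{s=q}^{n-1}\|d_s\|$ with $n-q\le b-1$ are both valid for each fixed $k$, but the scheme does not sum. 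After Cauchy--Schwarz the total cross contribution is dominated by
\[
\Bigl(\sum_{k=m}^{n-1}\|d_k\|\Bigr)\Bigl(\sum_{s=n-b+1}^{n-1}\|d_s\|\Bigr),
\]
and the first factor is not controlled by $\sum_k\|d_k\|^2$ uniformly in $n-m$. Your assertion that ``each squared increment $\|d_s\|^2$ is reused at most $b-1$ times'' is only half true: each $d_k$ meets at most $b-1$ partners $d_s$, but each $d_s$ with $s\in[n-b+1,n-1]$ is paired with \emph{every} $k\in[m,n-1]$ whose symbol $\sigma(k+1)$ has its last occurrence $q_{\sigma(k+1)}\le s$, and there are typically of order $n-m$ such $k$. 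Thus the mechanism you describe yields a bound that grows with $n-m$, not a constant depending only on $b$. The missing idea is to exploit the recurrence \emph{locally} rather than only at the far endpoint $n$: one must arrange the reduction so that every surviving cross pair $\langle d_k,d_l\rangle$ has \emph{both} indices confined to a common window of width $O(b)$ (and then count that each index lies in only $O(b)$ such pairs). Anchoring solely at $n$ cannot achieve this; Sakai's argument handles the combinatorics differently, and that step still has to be supplied in your proof.
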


Now, we extend the notion of a quasi-periodic function. 

Suppose that $\sigma:\mathbb{N}\to \mathbb{N}$ is a function such that for each $j\in \mathbb{N}$, {\color{blue} the inverse image of $\{j\}\subseteq\sigma(\mathbb{N})$ under $\sigma$ is an infinite set}, and let
$\{l_{n}\}_{n=1}^\infty$ be the increasing sequence (depending on $j$) of all
natural numbers such that $\sigma(l_{n})=j$ and $l_{0}=0$.
Let $\Gamma_F$ denote the set of all $j$ such that
\begin{align}\label{sup}
I(\sigma,j)=\sup_n \left(l_{n}-l_{n-1}\right)<\infty.
\end{align}
Set $\Gamma_\infty:=\sigma(\mathbb{N})\backslash \Gamma_F$. Hence, $\Gamma_F$ and $\Gamma_\infty$ are disjoint subsets of $\mathbb{N}$ and {\color{blue}  $\Gamma_F\cup\Gamma_\infty=\sigma(\mathbb{N})$. If $\Gamma_F=\sigma(\mathbb{N})$, then $\sigma$ is a quasi-periodic function.}
Let $\{k_n\}$ denote the complement of the union of all sequences $\{l_n\}$ in the form of an increasing sequence. Evidently, $\sigma(k_n)\in \Gamma_\infty$. 
\begin{definition} \label{pse}
	A function $\sigma:\mathbb{N}\to\mathbb{N}$ is a \emph{pseudo-periodic function} if $\Gamma_F=\{1, 2, \ldots, r\}$ for some positive integer $r$ and $\{k_n-k_{n-1}\}$ is an increasing sequence. 
In this case, there exists an integer $m\geq r$ such that for each integer $k\geq 0$, 
\begin{align*}
	\{1, 2, \ldots, r\}\subseteq\{\sigma(k+1), \sigma(k+2), \ldots, \sigma(k+m)\}.
\end{align*}

It should be noted that the terms {\color{blue}``quasi-periodic''}, ``pseudo-periodic'', and ``almost-periodic'' are established in the context of functions defined on the real line. However, their usage in the context of sequences appears to differ from their traditional meanings.
\end{definition}

The following is a typical example of random functions $\sigma:\mathbb{N}\to\mathbb{N}$ we study.
\begin{example}\label{exam}
Let $P_1$, $P_2$, and $P_3$ be arbitrary projections on a Hilbert space $\mathscr{H}$. Let $\{P_i\}_{i=4}^\infty$ be monotonically decreasing projections. Let $\sigma':\mathbb{N}\to \{4,5,\ldots\}$ be a random function {\color{blue} such that the inverse image of $\{j\}$ is an infinite set for each $j\geq 4$.} Define $\sigma:\mathbb{N}\to \mathbb{N}$ as follows: 
\[
\sigma(n)=\begin{cases}
1&n=3k-1\\
2&n=3k-2\\
3&n=3k,n\neq 3^{k'}\\
\sigma'(k')&n=3k,n=3^{k'}\\
\end{cases}
\qquad \mbox{for~some~ $k,k'\in\mathbb{N}$}
\]
For instance, when $j=3$, the sequence of indices for which $\sigma(l_n)=3$ is $\{l_n\}=6, 12, 15, 18, \ldots$.

It is easy to observe that 
\[I(\sigma,1)=I(\sigma,2)=3,\quad I(\sigma,3)=6, \quad I(\sigma,4)=I(\sigma,5)= \cdots=\infty\,.
\]
This shows that $\sigma$ is not quasi-periodic. 
Here, we ar examining the sequence
\begin{align*} 
	T_n &:=P_{\sigma(n)}\cdots P_{\sigma(4)} P_{\sigma(3)}P_{\sigma(2)} P_{\sigma(1)} \\
	&=P_{\sigma(n)}\cdots~ P_6P_1 \hspace{0.35cm} \cdots \hspace{0.15cm} P_3P_1P_2P_3P_1P_2P_3P_1P_2P_5P_1P_2P_3P_1P_2P_4P_1P_2\\
	&=P_{\sigma(n)}\cdots P_{\sigma(k_3)}P_1 \cdots P_3P_1P_2P_3P_1P_2P_3P_1P_2P_{\sigma(k_2)}P_1P_2P_3P_1P_2P_{\sigma(k_1)}P_1P_2
\end{align*}

The sequence $k_1, k_2, k_3, \ldots$ is $3, 9, 27, \ldots$. Evidently, $\sigma$ is pseudo-periodic. 
\end{example}


To achieve our result, we need several key lemmas.


{\color{blue}\begin{lemma}\label{sakai}
	 Let $P_n\in \mathbb{B}(\mathscr{H})$ be {\color{blue} the projection} onto a closed subspace $\mathscr{M}_n$ of $\mathscr{H}$ for each $n\in \mathbb{N}$. Let $\sigma:\mathbb{N}\to\mathbb{N}$ be a random function and set $T_1:=P_{\sigma(1)}$ and $T_n:=P_{\sigma(n)}T_{n-1}$. Then 
	\[
	\lim_{n\to \infty}\|T_{n-k}x-T_nx\|=0
	\]
	for each $k\geq 1$. 
\end{lemma}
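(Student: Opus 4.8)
The plan is to exploit the fact that the partial products $T_n$ act by successively applying orthogonal projections, so that the scalar sequence $\{\|T_nx\|\}$ is monotone and its increments control the vector increments via the Pythagorean identity. The only structural input needed is that each factor is an orthogonal projection; the function $\sigma$ plays no role, which is exactly why the hypotheses can be as weak as an arbitrary ``random'' $\sigma$.

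First I would record the two elementary facts about orthogonal projections that drive everything. For any projection $P$ and any $z\in\mathscr{H}$, orthogonality of the decomposition $z=Pz+(I-P)z$ gives $\|(I-P)z\|^2=\|z\|^2-\|Pz\|^2$; in particular $\|Pz\|\le\|z\|$. Applying the latter with $P=P_{\sigma(n)}$ and $z=T_{n-1}x$ shows that $\|T_nx\|=\|P_{\sigma(n)}T_{n-1}x\|\le\|T_{n-1}x\|$, so the nonnegative sequence $\{\|T_nx\|\}_n$ is non-increasing. Being bounded below by $0$, it converges, and consequently the telescoping increments satisfy $\|T_{m-1}x\|^2-\|T_mx\|^2\to 0$ as $m\to\infty$.

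Next I would telescope the difference $T_{n-k}x-T_nx$ along the $k$ intermediate partial products. Fix $k\ge 1$ and set $y_j:=T_{n-k+j}x$ for $j=0,1,\ldots,k$, so that $y_0=T_{n-k}x$, $y_k=T_nx$, and $y_j=P_{\sigma(n-k+j)}y_{j-1}$. Then $y_{j-1}-y_j=(I-P_{\sigma(n-k+j)})y_{j-1}$, and the Pythagorean identity from the previous step yields
\[
\|y_{j-1}-y_j\|^2=\|y_{j-1}\|^2-\|y_j\|^2=\|T_{n-k+j-1}x\|^2-\|T_{n-k+j}x\|^2 .
\]
Since $n-k+j\to\infty$ as $n\to\infty$ for each fixed $j$ and $k$, the right-hand side tends to $0$ by the convergence established above, whence $\|y_{j-1}-y_j\|\to 0$ for each $j$.

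Finally I would combine these estimates through the triangle inequality,
\[
\|T_{n-k}x-T_nx\|=\|y_0-y_k\|\le\sum_{j=1}^{k}\|y_{j-1}-y_j\| ,
\]
a finite sum of $k$ terms each tending to $0$, so the whole expression tends to $0$ as $n\to\infty$. The only genuine idea here is recognizing that the squared vector increments telescope into differences of the monotone scalar sequence $\{\|T_nx\|^2\}$; I expect this Pythagorean bookkeeping to be the one non-routine step, after which the triangle inequality and the convergence of a bounded monotone sequence finish the argument mechanically.
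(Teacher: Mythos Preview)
Your proof is correct and follows essentially the same route as the paper: establish monotonicity of $\{\|T_nx\|\}$, use the Pythagorean identity to get $\|T_{n-1}x-T_nx\|^2=\|T_{n-1}x\|^2-\|T_nx\|^2\to 0$, and then pass to general $k$ via the triangle inequality. The only cosmetic difference is that the paper phrases the last step as ``induction on $k$'' whereas you write out the telescoping sum explicitly.
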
} 
\begin{proof}
	The sequence $\{\|T_nx\|\}$ is decreasing since
	\[
	\|T_{n-1}x\|\geq \|T_{n}x\|\geq 0\qquad(n\geq 2).
	\]
	Hence, 
	\begin{align}\label{conv}
		\mbox{The sequence~} \{\|T_nx\|\} \mbox{~is a convergent sequence.}
	\end{align}
 
	For each $n$, we have 
	\begin{align}\label{nn+1}
		\|T_{n-1}x-T_nx\|^2&=\|(I-P_{\sigma(n)})T_{n-1}x\|^2 \nonumber\\
		&=\langle T_{n-1}x-P_{\sigma(n)}T_{n-1}x, T_{n-1}x-P_{\sigma(n)}T_{n-1}x\rangle \nonumber\\
		&=\|T_{n-1}x\|^2-\|P_{\sigma(n)}T_{n-1}x\|^2 \nonumber\\
		&=\|T_{n-1}x\|^2-\|T_{n}x\|^2.
	\end{align}
	Hence,
	\begin{equation}\label{eq01}
		\lim_{n\to \infty}\|T_{n-1}x-T_nx\|=0.
	\end{equation}
	By induction on $k$ and using the triangle inequality, we can obtain the required result. 
\end{proof}


\begin{lemma}\label{lemma lim}
	With the notation of Definition \ref{pse}, let $i\in \mathbb{N}$. For the unique integers $k_n$ and $k_{n+1}$ depending on $i$ such that $k_n\leq i<k_{n+1}$, it holds that 
	\[
	\lim_{i\to \infty}\|T_ix-T_{k_{n+1}}x\|=\lim_{i\to \infty}\|T_ix-T_{k_{n}}x\|=0\,.
	\]
\end{lemma}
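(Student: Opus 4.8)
The plan is to reduce everything to the square-summability of the consecutive differences together with the block-length-independent estimate supplied by Theorem~\ref{sakamain}. First I would record that, by \eqref{nn+1} and the convergence of $\{\|T_nx\|\}$ from \eqref{conv}, the nonnegative terms $\|T_{j+1}x-T_jx\|^2=\|T_jx\|^2-\|T_{j+1}x\|^2$ telescope, so that $\sum_{j=1}^{\infty}\|T_{j+1}x-T_jx\|^2=\|T_1x\|^2-\lim_n\|T_nx\|^2<\infty$. Consequently the tails $\varepsilon_N:=\sum_{j\geq N}\|T_{j+1}x-T_jx\|^2$ tend to $0$ as $N\to\infty$, and each individual term tends to $0$. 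Note also that as $i\to\infty$ the bracketing index $n$ (with $k_n\le i<k_{n+1}$) satisfies $n\to\infty$, hence $k_n\to\infty$. The difficulty of a purely elementary argument is already visible: telescoping and Cauchy--Schwarz give only $\|T_ix-T_{k_n}x\|\le\sqrt{i-k_n}\,\sqrt{\varepsilon_{k_n}}$, and since the block lengths $k_{n+1}-k_n$ grow without bound, the factor $\sqrt{i-k_n}$ may overwhelm the vanishing tail.

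The decisive point is therefore to replace $\sqrt{i-k_n}$ by a constant independent of the block. For this I would exploit the structure of $\sigma$ from Definition~\ref{pse}: on the interval $(k_n,k_{n+1})$ all values of $\sigma$ lie in $\Gamma_F=\{1,\dots,r\}$, and each such value recurs within $b:=\max_{1\le j\le r}I(\sigma,j)<\infty$ steps. Thus, on each block, the iterates $T_{k_n},T_{k_n+1},\dots$ form exactly a product of projections drawn from $\{P_1,\dots,P_r\}$ with recurrence bounded by $b$, which is precisely the situation governed by the inequality of Theorem~\ref{sakamain} with the block-independent constant $C:=(b-1)(b-2)+3$. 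Applying that estimate on $[k_n,i]$ (all intermediate projections being among $P_1,\dots,P_r$) yields
\[
\|T_ix-T_{k_n}x\|^2\le C\sum_{j=k_n}^{i-1}\|T_{j+1}x-T_jx\|^2\le C\,\varepsilon_{k_n},
\]
which tends to $0$ as $i\to\infty$ because $k_n\to\infty$; this is the second assertion. The main obstacle is precisely the justification of this block-uniform constant: one must verify that the estimate of Theorem~\ref{sakamain} depends only on the recurrence bound $b$ over the relevant window and is insensitive both to the base vector $T_{k_n}x$ and to the $\Gamma_\infty$-indices sitting at the endpoints of the block.

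For the first assertion I would isolate the single step at $k_{n+1}$, whose index $\sigma(k_{n+1})\in\Gamma_\infty$ is the only one not covered by the block estimate. By the triangle inequality,
\[
\|T_ix-T_{k_{n+1}}x\|\le\|T_ix-T_{k_{n+1}-1}x\|+\|T_{k_{n+1}-1}x-T_{k_{n+1}}x\|.
\]
The second summand equals $\big(\|T_{k_{n+1}-1}x\|^2-\|T_{k_{n+1}}x\|^2\big)^{1/2}$, a single telescoping term, hence tends to $0$ as $n\to\infty$ by \eqref{eq01}; the first summand is controlled exactly as above, since the projections applied on $[i,k_{n+1}-1]$ all lie among $P_1,\dots,P_r$, giving $\|T_ix-T_{k_{n+1}-1}x\|^2\le C\,\varepsilon_{k_n}\to0$. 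Combining the two displays finishes the proof.
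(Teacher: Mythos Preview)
Your argument is correct and follows essentially the same route as the paper: invoke Theorem~\ref{sakamain} on each block $(k_n,k_{n+1})$ (where $\sigma$ takes values only in $\Gamma_F$) to obtain a block-independent constant, telescope via \eqref{nn+1} so that the right-hand side is controlled by a vanishing tail of $\{\|T_jx\|^2\}$, and peel off the single $\Gamma_\infty$-step at $k_{n+1}$ (respectively $k_n$) using \eqref{eq01}. The only cosmetic differences are that the paper writes the bound as $M(\|T_ix\|^2-\|T_jx\|^2)$ rather than $C\,\varepsilon_{k_n}$ and uses the inequality $\|a+b\|^2\le 2\|a\|^2+2\|b\|^2$ in place of your triangle inequality; your explicit identification of $C=(b-1)(b-2)+3$ and your remark that this constant must be insensitive to the base vector and to the $\Gamma_\infty$-endpoints is in fact a bit more careful than the paper's ``fixed number $M$''.
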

\begin{proof}
	Consider the set $\mathscr{S}=\{T_{k_n+1}x, T_{k_n+2}x, \ldots, T_{k_{n+1}-1}x\}$. If we set $T_{k_n}x=y$, then
	\[
	\mathscr{S}=\{P_{k_n+1}y,P_{k_n+2}P_{k_n+1}y\,,\ldots\,, P_{k_{n+1}-1}\ldots P_{k_n+2}P_{k_n+1}y \}
	\]
	In fact, $\mathscr{S}$ is constructed from a part of a quasi-periodic sequence, since $\{k_n+1, k_n+2, \ldots, k_{n+1}-1\} \in \Gamma_F$. Hence, Theorem \ref{sakamain} ensures that
	\begin{align}\label{msm18}
	\|T_jx-T_ix\|^2\leq M\sum_{k=i}^{j-1}\|T_{k+1}x-T_{k}x\|^2,
	\end{align}
	for a fixed number $M\geq 0$ and all $j$ satisfying {\color{blue} $k_{n}<i < j < k_{n+1}$.} 
	From equality \eqref{nn+1}, we have 
	\[
	\|T_{k+1}x-T_{k}x\|^2=\|T_{k}x\|^2-\|T_{k+1}x\|^2\,,
	\]
	for each $ k$.
	 Therefore, inequality \eqref{msm18} yields that
	 \[
	 \|T_jx-T_ix\|^2\leq M(\|T_ix\|^2-\|T_jx\|^2)\,,\qquad{\color{blue}(k_{n}< i < j < k_{n+1}).}
	 \]

 Utilizing the parallelogram law and Lemma \ref{sakai}, we have 
 
	\begin{align*}
		\lim_i\|T_{k_{n+1}}x-T_ix\|^2&\leq \lim_i2\|T_{k_{n+1}}x-T_{k_{n+1}-1}x\|^2+2\|T_{k_{n+1}-1}x-T_ix\|^2\\
		&\leq \lim_i2\|T_{k_{n+1}}x-T_{k_{n+1}-1}x\|^2+\lim_i 2M(\|T_ix\|^2-\|T_{k_{n+1}-1}x\|^2)\\
		&=0,
	\end{align*}
	Since as $i\to \infty$, we have $k_{n+1}\to \infty$.
	By the same reasoning we get 
	\[
	\lim_{i\to \infty}\|T_ix-T_{k_{n}}x\|=0\,.
	\]
\end{proof}

\begin{lemma}\label{lemma p}
Let $r$ be a positive integer and let $P_1,\ldots, P_r\in \mathbb{B}(\mathscr{H})$ be projections onto closed subspaces $\mathscr{M}_1,\ldots, \mathscr{M}_r$ of $\mathscr{H}$, respectively. Let $\{P_i\}_{i=r+1}^\infty$ be monotonically decreasing projections on $\{\mathscr{M}_i\}_{i=r+1}^\infty$. {\color{blue} Let $\sigma:\mathbb{N}\to\mathbb{N}$ be {\color{blue} a pseudo-periodic function}. Let $T_1:=P_{\sigma(1)}$ and $T_n:=P_{\sigma(n)}T_{n-1}$. If $x\in\mathscr{H}$ is arbitrary, then 
\[
\lim_{n\to\infty}\|T_{k_n-1}x-P_{r+1}T_{k_n-1}x\|=0\,.
\]}
\end{lemma}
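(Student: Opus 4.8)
The plan is to exploit the fact that each rare position $k_n$ carries an index $\sigma(k_n)$ lying in $\Gamma_\infty$, hence $\sigma(k_n)\geq r+1$, together with the monotonicity of the tail projections. First I would record two structural observations. Since $\Gamma_F=\{1,\ldots,r\}$ and $\sigma(k_n)\in \Gamma_\infty=\sigma(\mathbb{N})\setminus\Gamma_F$, we have $\sigma(k_n)\geq r+1$ for every $n$; because $\{\mathscr{M}_i\}_{i=r+1}^\infty$ is decreasing, this gives the nesting $\mathscr{M}_{\sigma(k_n)}\subseteq \mathscr{M}_{r+1}$, equivalently $P_{\sigma(k_n)}=P_{\sigma(k_n)}P_{r+1}=P_{r+1}P_{\sigma(k_n)}$. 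Writing $u_n:=T_{k_n-1}x$, the single extra factor applied at the rare step is $T_{k_n}x=P_{\sigma(k_n)}u_n$, which therefore lands in $\mathscr{M}_{r+1}$.

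The core of the argument is a Pythagorean comparison of the two projections $P_{r+1}$ and $P_{\sigma(k_n)}$ applied to $u_n$. Because $u_n-P_{r+1}u_n\perp \mathscr{M}_{r+1}$ and $P_{\sigma(k_n)}u_n\in \mathscr{M}_{\sigma(k_n)}\subseteq \mathscr{M}_{r+1}$, the Pythagorean identity yields $\|u_n-P_{r+1}u_n\|^2=\|u_n\|^2-\|P_{r+1}u_n\|^2$ and $\|u_n-P_{\sigma(k_n)}u_n\|^2=\|u_n\|^2-\|P_{\sigma(k_n)}u_n\|^2$. The nesting $\mathscr{M}_{\sigma(k_n)}\subseteq \mathscr{M}_{r+1}$ forces $\|P_{\sigma(k_n)}u_n\|\leq \|P_{r+1}u_n\|$ (explicitly, $\|P_{\sigma(k_n)}u_n\|=\|P_{\sigma(k_n)}P_{r+1}u_n\|\leq \|P_{r+1}u_n\|$), so subtracting gives the key estimate
\[
\|T_{k_n-1}x-P_{r+1}T_{k_n-1}x\|^2=\|u_n-P_{r+1}u_n\|^2\leq \|u_n-P_{\sigma(k_n)}u_n\|^2=\|T_{k_n-1}x-T_{k_n}x\|^2.
\]

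Finally, since by pseudo-periodicity there are infinitely many rare positions with $k_n\to\infty$, Lemma \ref{sakai} (applied with step $k=1$) guarantees $\|T_{k_n-1}x-T_{k_n}x\|\to 0$ as $n\to\infty$. Combined with the displayed inequality, this forces $\|T_{k_n-1}x-P_{r+1}T_{k_n-1}x\|\to 0$, which is the claim. As for where the difficulty lies: the argument is short once the right comparison is set up, so there is no deep obstacle. The only point requiring care is the \emph{direction} of the monotonicity and the resulting inequality $\|P_{\sigma(k_n)}u_n\|\leq \|P_{r+1}u_n\|$; one must use that the tail subspaces decrease (so $\mathscr{M}_{\sigma(k_n)}$ is the smaller space and $P_{\sigma(k_n)}$ the smaller projection) rather than increase, and confirm that $\sigma(k_n)\geq r+1$ holds for every rare index so that $P_{r+1}$ dominates $P_{\sigma(k_n)}$ uniformly in $n$.
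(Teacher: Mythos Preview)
Your argument is correct and follows essentially the same route as the paper: both use $\sigma(k_n)\geq r+1$ and the monotonicity $P_{\sigma(k_n)}\leq P_{r+1}$ to bound $\|T_{k_n-1}x-P_{r+1}T_{k_n-1}x\|^2$ above by $\|T_{k_n-1}x\|^2-\|T_{k_n}x\|^2=\|T_{k_n-1}x-T_{k_n}x\|^2$, and then invoke Lemma~\ref{sakai}. The only cosmetic difference is that the paper stops at the telescoping form $\|T_{k_n-1}x\|^2-\|T_{k_n}x\|^2$ while you pass one step further to $\|T_{k_n-1}x-T_{k_n}x\|^2$; by \eqref{nn+1} these are identical.
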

\begin{proof}
	{\color{blue} Suppose that $x\in \mathscr H$ is arbitrary. Since, $P_{r+1}\geq P_{k_n}$ for all $n\geq 1$ we have
	\begin{align*}
\|T_{k_n-1}x-P_{r+1}T_{k_n-1}x\|^2&=\|(I-P_{r+1})T_{k_n-1}x\|^2\\
&=\|T_{n_k-1}x\|^2-\|P_{r+1}T_{k_n-1}x\|^2\\
&= \|T_{n_k-1}x\|^2-\langle P_{r+1}T_{k_n-1}x,P_{r+1}T_{k_n-1}x\rangle\\
&\leq  \|T_{n_k-1}x\|^2-\langle P_{k_n}T_{k_n-1}x,P_{k_n}T_{k_n-1}x\rangle\\
&=\|T_{k_n-1}x\|^2-\|T_{k_n}x\|^2.
	\end{align*}
Now, the  result follows immediately from Lemma \ref{sakai}.}
\end{proof}



We establish our first result concerning the weak convergence of a random product of projections that involved a pseudo-periodic function.

\begin{theorem}\label{mainth}
	Let $r$ be a positive integer and let $P_1,\ldots, P_r\in \mathbb{B}(\mathscr{H})$ be distinct projections. Let $\{P_i\}_{i=r+1}^\infty$ be a monotonically decreasing sequence of (not necessarily distinct) projections. Let $\mathcal{R}(P_i)=\mathscr{M}_i$ for each $i\in\mathbb{N}$. 
	
	Let $\sigma:\mathbb{N}\to\mathbb{N}$ be {\color{blue} a pseudo-periodic function.} Set $T_1:=P_{\sigma(1)}$ and $T_n:=P_{\sigma(n)}T_{n-1}$. Then the sequence $\{T_nx\}$ converges to $Px$ weakly, where $P$ is the projection onto $\bigcap_{i=1}^\infty\mathscr{M}_i$.
\end{theorem}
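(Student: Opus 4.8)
The plan is to prove weak convergence by the standard route of identifying every weak cluster point of $\{T_nx\}$ with $Px$. Since $\|T_nx\|\le\|x\|$, the sequence is bounded, so by weak compactness of balls it has weak cluster points, and it suffices to show that any $z$ with $T_{n_l}x\rightharpoonup z$ equals $Px$. By Lemma \ref{lemma lim} together with Lemma \ref{sakai}, the full sequence and the subsequence $\{T_{k_n}x\}$ (equivalently $\{T_{k_n-1}x\}$) indexed by the infinite-gap positions share the same weak cluster points, so I may assume the realizing indices have the form $k_{m_l}$ or $k_{m_l}-1$.

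First I would show $z\in\mathscr{M}_i$ for each $i\in\{1,\dots,r\}$. Because $i\in\Gamma_F$, the value $i$ is taken by $\sigma$ with gaps bounded by $I(\sigma,i)<\infty$; hence for every $n$ there is $p(n)\in[n,n+I(\sigma,i)]$ with $\sigma(p(n))=i$, so $T_{p(n)}x\in\mathscr{M}_i$. A bounded-lag version of Lemma \ref{sakai}, obtained by summing the at most $I(\sigma,i)$ telescoping differences $\|T_{k+1}x-T_kx\|$ (each tending to $0$ by \eqref{eq01}), gives $\|T_{p(n)}x-T_nx\|\to0$, whence $T_{p(n)}x\rightharpoonup z$; since $\mathscr{M}_i$ is weakly closed, $z\in\mathscr{M}_i$.

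The hard part will be to show $z\in\mathscr{M}_j$ for every $j\ge r+1$, that is $z\in\mathscr{M}_\infty:=\bigcap_{j\ge r+1}\mathscr{M}_j$. Lemma \ref{lemma p} delivers this only for $j=r+1$, where it shows $(I-P_{r+1})T_{k_n-1}x\to0$, placing every cluster point in $\mathscr{M}_{r+1}$. The obstruction is that between two consecutive infinite-gap positions the finite projections $P_1,\dots,P_r$ act and need not preserve the smaller subspaces $\mathscr{M}_j$, while the infinite-gap indices that are $\ge j$ occur only along a cofinal but sparse set of positions with growing gaps. To overcome this I would first generalize Lemma \ref{lemma p}: for fixed $j\ge r+1$ and any infinite-gap position $k_n$ with $\sigma(k_n)\ge j$, the nesting of the decreasing family gives $P_j\ge P_{\sigma(k_n)}$, so the computation of Lemma \ref{lemma p} yields $\|(I-P_j)T_{k_n-1}x\|^2\le\|T_{k_n-1}x\|^2-\|T_{k_n}x\|^2\to0$ along the infinite set $A_j:=\{n:\sigma(k_n)\ge j\}$ (infinite because each used index has infinite $\sigma$-preimage). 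Thus $T_{k_n}x\in\mathscr{M}_{\sigma(k_n)}\subseteq\mathscr{M}_j$ for $n\in A_j$, giving a subsequence of $\{T_{k_n}x\}$ lying in the weakly closed $\mathscr{M}_j$. The most delicate point, which I expect to be the technical heart of the argument, is to transfer this into the particular cluster point $z$; here I would exploit the nested (hence mutually commuting) structure of $\{P_j\}_{j\ge r+1}$, the strong convergence $P_j\to P_\infty$ of the decreasing projections, and Lemma \ref{lemma lim} to force the weak limit into $\mathscr{M}_j$ for every $j$, and so into $\mathscr{M}_\infty$.

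Finally, pinning down the limit is clean. For any $w\in\mathscr{M}=\bigcap_{i=1}^\infty\mathscr{M}_i$ one has $w\in\mathscr{M}_{\sigma(n)}$, so $P_{\sigma(n)}w=w$ and therefore $\langle T_nx,w\rangle=\langle T_{n-1}x,P_{\sigma(n)}w\rangle=\langle T_{n-1}x,w\rangle=\cdots=\langle x,w\rangle$ for every $n$. Passing to the weak limit gives $\langle z,w\rangle=\langle x,w\rangle$ for all $w\in\mathscr{M}$, i.e.\ $x-z\in\mathscr{M}^\perp$. Together with $z\in\mathscr{M}$ from the previous steps, the two facts $z\in\mathscr{M}$ and $x-z\in\mathscr{M}^\perp$ characterize $z=Px$. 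As every weak cluster point of the bounded sequence $\{T_nx\}$ equals $Px$, the whole sequence converges weakly to $Px$.
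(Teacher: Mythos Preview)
Your route via weak cluster points is different from the paper's. The paper does not chase membership $z\in\mathscr{M}_j$ subspace by subspace; instead it fixes test vectors $x,y$, decomposes $x=x_1+x_2$ orthogonally with $x_1\in\mathscr{M}_{r+1}^\perp$ and $x_2\in\mathscr{M}_{r+1}$, and computes $\langle T_{k_n-1}y,x\rangle$ directly. The $x_1$-term is killed by Lemma~\ref{lemma p}, the paper asserts the $x_2$-term equals $\langle Py,x\rangle$, and Lemma~\ref{lemma lim} then upgrades from the subsequence $\{k_n-1\}$ to the full sequence. Your steps for $i\le r$, your use of Lemma~\ref{lemma lim} to pass between $\{T_nx\}$ and $\{T_{k_n}x\}$, and your final identification $z=Px$ from $z\in\mathscr{M}$ together with $x-z\in\mathscr{M}^\perp$ are all clean and correct.

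The genuine gap is precisely the step you label ``most delicate,'' and the tools you list do not close it. For fixed $j>r+1$ you produce the infinite set $A_j=\{n:\sigma(k_n)\ge j\}$ along which $(I-P_j)T_{k_n-1}x\to0$; hence every weak accumulation point of $\{T_{k_n}x\}_{n\in A_j}$ lies in $\mathscr{M}_j$. But the cluster point $z$ you are analyzing is realized along a \emph{specific} subsequence $\{m_l\}$ which may be disjoint from $A_j$, and between two successive elements of $A_j$ the finite projections $P_1,\dots,P_r$ act; these need not preserve $\mathscr{M}_j$, so there is no mechanism that carries ``$T_{k_n}x$ is near $\mathscr{M}_j$ for $n\in A_j$'' over to the indices $m_l$. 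The ingredients you invoke --- nestedness and mutual commutativity of $\{P_j\}_{j\ge r+1}$, the strong limit $P_j\to P_\infty$, and Lemma~\ref{lemma lim} --- all concern either the decreasing family in isolation or one-step differences $\|T_{t+1}x-T_tx\|$; none of them bounds $\|(I-P_j)T_{k_n}x\|$ for $n\notin A_j$. As written your argument establishes only $z\in\bigcap_{i=1}^{r+1}\mathscr{M}_i$, which together with $x-z\in\mathscr{M}^\perp$ does not force $z=Px$ unless $\bigcap_{i=1}^{r+1}\mathscr{M}_i=\bigcap_{i=1}^\infty\mathscr{M}_i$ --- a hypothesis the paper imposes only in Theorem~\ref{thstrong}, not here.
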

\begin{proof}
Without loss of generality, we assume that $\Gamma_F=\{1,\ldots, r\}$, $\Gamma_\infty=\{r+1, r+2, \ldots \}$.  {\color{blue} Since $\{P_i\}_{i=r+1}^\infty$ is a monotonically decreasing sequence, we can assume that $\{k_n\}$ satisfies $k_{n+1}-k_n>1$.}  With the notation in the definition of the pseudo-periodic function $\sigma$, $P_{\sigma(k_n)}$ appears as the $k_n$-th projection from the right side in the definition of $T_n:=P_{\sigma(n)}\cdots P_{\sigma(4)} P_{\sigma(3)}P_{\sigma(2)} P_{\sigma(1)}$. Hence, we can write 
 	\begin{align*}
 		T_{k_n+1}&= P_{k_n+1}P_{k_n}T_{k_n-1}.
 \end{align*}
 {\color{blue}Thus, $k_n+1\in\{1, 2, \ldots, r\}$. 
 Let $x, y\in \mathscr{H}$ be arbitrary. Using the decomposition  $x=x_{1}+x_{2}$, where
	$$x_{1} \in \mathcal{R}(I-P_{r+1})\quad \textrm{and}\quad x_{2}\in  \mathscr{M}_{r+1}\,,$$
	we have 
	\begin{align*}
	\langle T_{k_n-1}y, x\rangle &=\langle T_{k_n-1}y, x_{1}+x_{2}\rangle\\
	&=\langle T_{k_n-1}y,(I-P_{r+1})z\rangle+\langle T_{k_n-1}y,x_{2}\rangle,\qquad(\mathrm{for~some~} z\in \mathscr{H})\\
	&=\langle (I-P_{r+1})T_{k_n-1}y, z\rangle+\langle y,T_{k_n-1}x_{2}\rangle \\
	&=\langle (T_{k_n-1}-P_{r+1}T_{k_n-1})y,z\rangle+\langle y,Px\rangle\\
		&=\langle (T_{k_n-1}-P_{r+1}T_{k_n-1})y,z\rangle+\langle Py,x\rangle.\\
		\end{align*}
		Hence, 
		\begin{align*}
\left|\langle T_{k_n-1}y, x\rangle-\langle Py,x\rangle\right|\leq	\|T_{k_n-1}y-P_{r+1}T_{k_n-1}y\|\|z\|.
\end{align*}}
 {\color{blue}From Lemma \ref{lemma p}} we derive that $\{T_{k_n-1}x\}$ weakly converges to $Px$. Hence, $\{T_{k_n}x\}$ also weakly converges to $Px$. For each $i$, there exists a positive integer $n$ that depends on $i$, such that $k_n\leq i<k_{n+1}$. 
	
	It follows from {\color{blue}Lemma \ref{lemma lim} that  $\lim_i\|T_ix-T_{k_n}x\|=0$.} Therefore,
	\[
	\lim_i	\langle T_ix,y\rangle={\color{blue}\lim_i\left(\langle T_ix-T_{k_n}x,y\rangle +\langle T_{k_n}x,y\rangle\right)} =\langle Px,y\rangle\,.
	\]
	This implies that $\{T_ix\}$ weakly converges to $Px$.
	
\end{proof}

Additional conditions are required to guarantee the strong convergence of the sequence $\{T_nx\}$ to $Px$. Specific terminology and lemmas are needed to achieve this result. 

The following notion is introduced in \cite{BAU}.

\begin{definition}\label{defangle}
	The angle of an $r$-tuple of closed subspaces $(\mathscr{M}_1, \mathscr{M}_2,\ldots, \mathscr{M}_r)$ in a Hilbert space $\mathscr{H}$ is the angle in $[0, \frac{\pi}{2}]$
	whose cosine is defined as
	\[
	c_b(\mathscr{M}_1, \mathscr{M}_2,\ldots, \mathscr{M}_r)=\|P_r\ldots P_1P_{(\bigcap_{i=1}^r\mathscr{M}_i)^\perp}\|
	\]
\end{definition}
 
{\color{blue}
The concept of \textit{inner inclination} of an $m-$tuple $(\mathscr{M}_1,\mathscr{M}_2,\ldots,\mathscr{M}_m)$ of closed subspaces of a Hilbert space $\mathscr{H}$ is defined in \cite[Definition 2.2]{PUS3} as: 
\begin{equation}\label{eqinnerinclination}
\tilde{\ell}(\mathscr{M}_1,\mathscr{M}_2,\ldots,\mathscr{M}_m)=\min_{1\leq i\leq m}\inf_{x\in \mathscr{M}_i\backslash \mathscr{M}}\frac{\max_{1\leq j\leq m}\mathrm{dist}(x;\mathscr{M}_j)}{\mathrm{dist}(x;\mathscr{M})},
\end{equation}
where $\mathscr{M}=\bigcap_{i=1}^m\mathscr{M}_i \neq \mathscr{H}$ and the minimum is taken over all $i=1,\cdots, m$. Here, $\mathrm{dist}(x;\mathscr{M})$ denotes the distance between a point $x \in \mathscr{H}$ and a subspace $\mathscr{M}$ of $\mathscr{H}$. In addition, the concept of \emph{inclination} is defined in \cite{BAD} by:
\begin{equation}\label{eqinclination}
	\ell(\mathscr{M}_1,\mathscr{M}_2,\ldots,\mathscr{M}_m)=\inf_{x\notin \mathscr{M}}\frac{\max_{1\leq j\leq m}\mathrm{dist}(x;\mathscr{M}_j)}{\mathrm{dist}(x;\mathscr{M})},
\end{equation}

The following example demonstrates that $c_b(\mathscr{M}_1, \mathscr{M}_2,\ldots, \mathscr{M}_m)$ differs from both the quantities $\tilde{\ell}(\mathscr{M}_1,\mathscr{M}_2,\ldots,\mathscr{M}_m)$ and $\ell(\mathscr{M}_1,\mathscr{M}_2,\ldots,\mathscr{M}_m)$, in general.

\begin{example}
	Let $\mathscr H=\mathbb{C}^2$ with the canonical basis $\{e_1, e_2\}$. Suppose that 
	\[
	\mathscr{M}_1=<e_1>\quad\mbox{and}\quad\mathscr{M}_2=<e_2>.
	\]
	Then $\mathscr{M}=\mathscr{M}_1\cap\mathscr{M}_2=\{0\}$. Let $P_1, P_2$, and $P=0$ be the projections onto $\mathscr{M}_1$, $\mathscr{M}_2$, and $\mathscr{M}$, respectively.	It follows from Definition \ref{defangle} that 
	\[
	c_b(\mathscr{M}_1, \mathscr{M}_2)=\|P_1P_2(I-P)\|=0.
	\]
	Let $x=\alpha_1e_1+\alpha_2e_2 \in \mathscr{H}\backslash\{0\}$. Therefore,
	\[
	\mathrm{dist}(x;\mathscr{M}_1)=\|x-P_1x\|=|\alpha_2|,\,\, \mathrm{dist}(x;\mathscr{M}_2)=|\alpha_1|,\,\,\mbox{and}\,\,\mathrm{dist}(x;\mathscr{M})=\sqrt{|\alpha_1|^2+|\alpha_2|^2}.
	\]
	Since $\sqrt{|\alpha_1|^2+|\alpha_2|^2} \leq 2\max\{|\alpha_1,\alpha_2|\}$, we have
	\[\ell(\mathscr{M}_1,\mathscr{M}_2)=\inf_{x\notin \mathscr{M}}\frac{\max_{1\leq j\leq 2}\mathrm{dist}(x;\mathscr{M}_j)}{\mathrm{dist}(x;\mathscr{M})}=\inf_{x=\alpha_1e_1+\alpha_2e_2\neq 0}\frac{\max\{|\alpha_1|,|\alpha_2|\}}{\sqrt{|\alpha_1|^2+|\alpha_2|^2}} \geq \frac{1}{2}.\]
	Moreover,
	$$\inf_{x\in \mathscr{M}_1\backslash \mathscr{M}}\frac{\max\{\mathrm{dist}(x;\mathscr{M}_1),\mathrm{dist}(x;\mathscr{M}_2)\}}{\mathrm{dist}(x;\mathscr{M})}=\inf_{x\in \mathscr{M}_1\backslash \mathscr{M}}\frac{\max\{0,\|x\|\}}{\|x\|}=1$$
	and
		$$\inf_{x\in \mathscr{M}_2\backslash \mathscr{M}}\frac{\max\{\mathrm{dist}(x;\mathscr{M}_1),\mathrm{dist}(x;\mathscr{M}_2)\}}{\mathrm{dist}(x;\mathscr{M})}=\inf_{x\in \mathscr{M}_1\backslash \mathscr{M}}\frac{\max\{\|x\|,0\}}{\|x\|}=1.$$
		This yields that $\tilde{\ell}(\mathscr{M}_1,\mathscr{M}_2)=1$.
\end{example}}

The following lemma is interesting on its own right.

\begin{lemma}\label{lemmclsed} \cite{BAU}
\[
c_b(\mathscr{M}_1, \mathscr{M}_2,\ldots, \mathscr{M}_r)<1
\]
if and only if
\[
\mathscr{M}_1^\perp+\mathscr{M}_2^\perp\ldots+\mathscr{M}_r^\perp\,.
\]
is closed.
\end{lemma}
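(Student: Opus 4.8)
The plan is to localize everything to $\mathscr{M}^\perp$, where $\mathscr{M}=\bigcap_{i=1}^r\mathscr{M}_i$, writing $Q:=P_r\cdots P_1$ and $Q_k:=P_k\cdots P_1$ with $Q_0:=I$. Since each $P_i$ fixes $\mathscr{M}$, a one-line adjoint computation shows $Q$ leaves $\mathscr{M}^\perp$ invariant, so by Definition \ref{defangle} the quantity to control is simply the restricted norm $c_b=\|Q|_{\mathscr{M}^\perp}\|$. Two elementary facts will do the work. Iterating $\|P_k y\|^2=\|y\|^2-\|(I-P_k)y\|^2$ along $y=Q_{k-1}x$ telescopes to the energy identity
\[
\|x\|^2-\|Qx\|^2=\sum_{k=1}^r\|(I-P_k)Q_{k-1}x\|^2 ,
\]
while the operator telescoping $I-Q=\sum_{k=1}^r(I-P_k)Q_{k-1}$ gives, because $(I-P_k)Q_{k-1}x\in\mathcal{R}(I-P_k)=\mathscr{M}_k^\perp$, the inclusion $\mathcal{R}(I-Q)\subseteq\sum_{k=1}^r\mathscr{M}_k^\perp$. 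I will also use the standard identity $\overline{\sum_{i}\mathscr{M}_i^\perp}=\mathscr{M}^\perp$.

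First I would prove that $c_b<1$ forces $\sum_i\mathscr{M}_i^\perp$ to be closed. As $\|Q|_{\mathscr{M}^\perp}\|=c_b<1$, the Neumann series $\sum_{n\ge0}(Q|_{\mathscr{M}^\perp})^n$ converges in operator norm, so $(I-Q)|_{\mathscr{M}^\perp}$ is invertible and in particular carries $\mathscr{M}^\perp$ onto $\mathscr{M}^\perp$. Feeding this into the telescoping inclusion yields $\mathscr{M}^\perp=\mathcal{R}\big((I-Q)|_{\mathscr{M}^\perp}\big)\subseteq\sum_i\mathscr{M}_i^\perp\subseteq\mathscr{M}^\perp$, whence $\sum_i\mathscr{M}_i^\perp=\mathscr{M}^\perp$ is closed.

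For the converse I would first turn closedness into a quantitative, unchained statement via the closed-range theorem. Apply it to the bounded map $A\colon\bigoplus_{i=1}^r\mathscr{M}_i^\perp\to\mathscr{H}$, $A(u_1,\ldots,u_r)=\sum_i u_i$, whose range is $\sum_i\mathscr{M}_i^\perp$ and whose adjoint is $A^*y=\big((I-P_i)y\big)_i$; here $\mathcal{N}(A^*)=\mathscr{M}$, so closedness of $\mathcal{R}(A)$ is equivalent to $A^*$ being bounded below on $\mathscr{M}^\perp$, i.e.\ to the existence of $c>0$ with $\sum_i\|(I-P_i)x\|^2\ge c\|x\|^2$ for every $x\in\mathscr{M}^\perp$. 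The remaining task is to pass from this unchained sum to the chained one appearing in the energy identity. The bridge is the pointwise bound $\|(I-P_i)x\|\le\sum_{k=1}^r\|(I-P_k)Q_{k-1}x\|$, proved by writing $(I-P_i)x=(I-P_i)Q_{i-1}x+(I-P_i)(x-Q_{i-1}x)$ and estimating $\|x-Q_{i-1}x\|$ through the telescoping sum. Squaring, summing over $i$, and applying Cauchy--Schwarz gives $\sum_i\|(I-P_i)x\|^2\le r^2\big(\|x\|^2-\|Qx\|^2\big)$, which together with the lower bound $c\|x\|^2$ produces $\|Qx\|^2\le(1-c/r^2)\|x\|^2$ on $\mathscr{M}^\perp$, and hence $c_b<1$.

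The main obstacle is exactly this converse: the definition of $c_b$ involves the noncommutative chained products $Q_{k-1}$, whereas closedness of $\sum_i\mathscr{M}_i^\perp$ is detected by the symmetric, unchained operator $AA^*=\sum_i(I-P_i)$. Reconciling the two---through the telescoping identity and the crude Cauchy--Schwarz comparison (the resulting constant $r^2$ is irrelevant for the qualitative equivalence)---is the heart of the argument; by contrast the forward direction is essentially a Neumann-series formality once the telescoping inclusion is in hand.
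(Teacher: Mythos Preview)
The paper does not give its own proof of this lemma: it merely quotes the result from \cite{BAU}. So there is no in-paper argument to compare against; you have supplied a self-contained proof where the paper defers to a reference.

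Your argument is correct. The forward direction is clean: each $P_i$ fixes $\mathscr{M}$, hence leaves $\mathscr{M}^\perp$ invariant, so $c_b=\|Q|_{\mathscr{M}^\perp}\|<1$ makes $(I-Q)|_{\mathscr{M}^\perp}$ invertible by the Neumann series, and the telescoping $I-Q=\sum_{k}(I-P_k)Q_{k-1}$ then forces $\sum_i\mathscr{M}_i^\perp=\mathscr{M}^\perp$. For the converse, your use of the closed-range theorem applied to $A(u_1,\dots,u_r)=\sum_i u_i$ correctly turns closedness of $\sum_i\mathscr{M}_i^\perp$ into the uniform lower bound $\sum_i\|(I-P_i)x\|^2\ge c\|x\|^2$ on $\mathscr{M}^\perp$; the chaining estimate $\|(I-P_i)x\|\le\sum_{k\le i}\|(I-P_k)Q_{k-1}x\|$ via $x-Q_{i-1}x=\sum_{k<i}(I-P_k)Q_{k-1}x$ then feeds into the energy identity to give $\|Qx\|^2\le(1-c/r^2)\|x\|^2$. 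All steps check out, and the crude constant $r^2$ is indeed irrelevant for the qualitative statement. This is essentially the standard route taken in \cite{BAU}, so your proof is in the spirit of the cited source even though the present paper omits it.
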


\begin{remark}\label{remark1}
In view of the above lemma, if $c_b(\mathscr{M}_1, \mathscr{M}_2,\ldots, \mathscr{M}_r)<1$, then for each $t>r$ we have 
\[
c_b(\mathscr N_1, \mathscr N_2,\ldots, \mathscr N_t)<1\,,
\]
when $\{\mathscr N_i: 1\leq i\leq t\}=\{\mathscr{M}_i:1\leq i\leq r\}$.
\end{remark}

The proof of the subsequent lemma is straightforward, therefore, we omit it.

\begin{lemma}\label{lemma sakae}
	Let $Q$ be the projection onto a closed subspace of a
	Hilbert space. Let $x$ and $y$ be elements in the space. Then
	\begin{equation}
		\|x-y\|^2\leq \|x-Qy\|^2+\|x-Qx\|^2+2\|y-Qy\|^2.
	\end{equation}
\end{lemma}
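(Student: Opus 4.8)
The plan is to reduce the asserted inequality to the elementary parallelogram inequality by exploiting the orthogonal splitting induced by $Q$. Since $Q$ is an orthogonal projection, we have the decomposition $\mathscr{H}=\mathcal{R}(Q)\oplus\mathcal{R}(Q)^\perp$, and I would begin by writing $x=Qx+(I-Q)x$ and $y=Qy+(I-Q)y$, recording that $Qx,Qy\in\mathcal{R}(Q)$ while $(I-Q)x,(I-Q)y\in\mathcal{R}(Q)^\perp$. The key structural observation is that every difference appearing in the four terms decomposes into a piece lying in $\mathcal{R}(Q)$ and a piece lying in $\mathcal{R}(Q)^\perp$, so that each squared norm splits by the Pythagorean theorem.

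Next I would compute the three terms on the right-hand side in these coordinates. Writing $x-Qy=Q(x-y)+(I-Q)x$ as an orthogonal sum gives $\|x-Qy\|^2=\|Q(x-y)\|^2+\|(I-Q)x\|^2$; moreover $\|x-Qx\|^2=\|(I-Q)x\|^2$ and $\|y-Qy\|^2=\|(I-Q)y\|^2$. Hence the right-hand side equals
\[
\|Q(x-y)\|^2+2\|(I-Q)x\|^2+2\|(I-Q)y\|^2.
\]
On the other hand, splitting $x-y=Q(x-y)+(I-Q)(x-y)$ orthogonally yields $\|x-y\|^2=\|Q(x-y)\|^2+\|(I-Q)(x-y)\|^2$, so after cancelling the common term $\|Q(x-y)\|^2$ the claim is equivalent to
\[
\|(I-Q)x-(I-Q)y\|^2\leq 2\|(I-Q)x\|^2+2\|(I-Q)y\|^2,
\]
which is precisely the parallelogram inequality $\|u-v\|^2\leq 2\|u\|^2+2\|v\|^2$ applied to $u=(I-Q)x$ and $v=(I-Q)y$ (a consequence of $\|u-v\|^2+\|u+v\|^2=2\|u\|^2+2\|v\|^2$).

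There is no genuine obstacle here; the only point requiring care is verifying that the cross terms vanish in each Pythagorean expansion, i.e.\ that the $\mathcal{R}(Q)$-components are orthogonal to the $\mathcal{R}(Q)^\perp$-components in $x-y$, $x-Qy$, and $x-y$ respectively. Once the orthogonality of these components is noted, the proof is a short bookkeeping computation followed by the parallelogram inequality, which is why the statement can be recorded without detailed proof.
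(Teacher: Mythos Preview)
Your argument is correct: the orthogonal decomposition $x-Qy=Q(x-y)+(I-Q)x$ and the Pythagorean identities reduce the inequality to $\|(I-Q)x-(I-Q)y\|^2\le 2\|(I-Q)x\|^2+2\|(I-Q)y\|^2$, which is the parallelogram inequality. The paper itself omits the proof as ``straightforward'', so there is no approach to compare against; your write-up supplies exactly the kind of routine verification the authors had in mind.
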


The next lemma {\color{blue}reads} as follows.

\begin{lemma}\label{lemma subsequence}
Let $\{P_i\}_{i\in J}$ be a finite or infinite sequence of projections acting on a Hilbert space $\mathscr{H}$. 	Let $\sigma:\mathbb{N}\to J$ be a random function. Set $T_1:=P_{\sigma(1)}$ and $T_n:=P_{\sigma(n)}T_{n-1}$. {\color{blue}Let the sequence $\{T_nx\}$ weakly converge to $Px$}, where $P$ is the projection onto $\bigcap_{i\in J}\mathcal{R}(P_i)$. {\color{blue}If there exists a subsequence $\{T_{n_kx}\}$ which converges to $Px$ strongly}, then the sequence $\{T_nx\}$ converges to $Px$ strongly.
\end{lemma}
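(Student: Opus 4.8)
The plan is to deduce strong convergence from two ingredients only: the monotonicity of the scalar sequence $\{\|T_nx\|\}$ and the given weak convergence, combined through a Radon--Riesz (Kadec--Klee) type identity. The heart of the matter is that a single strongly convergent subsequence, together with monotonicity of the norms, forces \emph{all} the norms to converge to $\|Px\|$, after which the Hilbert space geometry finishes the argument automatically.

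First I would record that $\{\|T_nx\|\}$ is nonincreasing: since $T_nx=P_{\sigma(n)}T_{n-1}x$ and $P_{\sigma(n)}$ is a projection, we have $\|T_nx\|\leq\|T_{n-1}x\|$ for all $n$ (this is precisely the monotonicity already observed in the proof of Lemma \ref{sakai}). Being nonincreasing and bounded below by $0$, the sequence $\{\|T_nx\|\}$ converges to some limit $L\geq 0$. Next I would exploit the hypothesis that the subsequence $\{T_{n_k}x\}$ converges strongly to $Px$: continuity of the norm gives $\|T_{n_k}x\|\to\|Px\|$. But $\{\|T_{n_k}x\|\}$ is a subsequence of the convergent sequence $\{\|T_nx\|\}$, so its limit must coincide with $L$; hence $L=\|Px\|$, and therefore $\|T_nx\|\to\|Px\|$ for the \emph{whole} sequence.

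Finally I would combine this with the weak convergence. Expanding
\[
\|T_nx-Px\|^2=\|T_nx\|^2-2\Re\langle T_nx,Px\rangle+\|Px\|^2,
\]
the weak convergence $T_nx\rightharpoonup Px$ yields $\langle T_nx,Px\rangle\to\langle Px,Px\rangle=\|Px\|^2$, while the previous step gives $\|T_nx\|^2\to\|Px\|^2$. Hence the right-hand side tends to $\|Px\|^2-2\|Px\|^2+\|Px\|^2=0$, which shows that $\{T_nx\}$ converges strongly to $Px$, as required.

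There is no deep obstacle here; the only point that requires care is the logical order in the middle step. One cannot invoke the Radon--Riesz identity with a mere subsequential statement about the norms: what the final expansion needs is convergence of $\|T_nx\|$ to $\|Px\|$ along the \emph{entire} sequence. The role of monotonicity is exactly to upgrade the subsequential limit $\|T_{n_k}x\|\to\|Px\|$ to a limit of the full sequence, and the role of weak convergence is to control the cross term $\Re\langle T_nx,Px\rangle$. Isolating these two functions of the hypotheses, and checking that each of $P_1,\dots$ fixes $\mathcal R(P)=\bigcap_{i\in J}\mathcal R(P_i)$ is what legitimizes identifying the common target as $Px$, is all the bookkeeping the proof demands.
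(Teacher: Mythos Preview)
Your proof is correct and follows the same overall strategy as the paper: show that the full sequence of norms $\|T_nx\|$ converges to $\|Px\|$ using monotonicity together with the strongly convergent subsequence, and then combine this with weak convergence via the Radon--Riesz identity to obtain strong convergence. The only difference is in how the convergence of $\|T_nx\|$ is established: the paper passes through the operator inequality $|T_n|\le |T_{n-1}|$ and invokes Vigier's theorem to get a strong operator limit $S$ with $\|T_nx\|\to\|Sx\|$, whereas you simply observe that the \emph{scalar} sequence $\{\|T_nx\|\}$ is nonincreasing and bounded below, hence convergent. Your route is more elementary and entirely sufficient for the fixed vector $x$ at hand; the paper's detour through operator monotone convergence gains nothing here. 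One small remark: your closing comment about checking that each $P_i$ fixes $\mathcal{R}(P)$ is unnecessary, since both the weak limit and the strong subsequential limit are already assumed to be $Px$ by hypothesis.
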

\begin{proof}
{\color{blue}Let  $|T|=(T^*T)^{1/2}$. Since 
\begin{align*}
	\langle |T_n|^2x,x\rangle&=\langle P_{\sigma(n)}T_{n-1}x,P_{\sigma(n)}T_{n-1}x\rangle=\langle P_{\sigma(n)}T_{n-1}x, T_{n-1}x\rangle\\
	&\leq \langle T_{n-1}x,T_{n-1}x\rangle= \langle |T_{n-1}|^2x, x\rangle
\end{align*}
we have $|T_{n-1}|^2\geq |T_n|^2$, and hence, $|T_{n-1}|\geq |T_n| \geq 0$. Therefore, by the Vigier theorem \cite[Theorem 4.1.1]{MUR}, there exists a positive operator $S$ such that the sequence $\{|T_n|x\}$ strongly converges to $Sx$ for all $x$. Thus, $\|T_nx\|^2=\|\,|T_n|x\|^2 \to \|Sx\|^2$ as $n \to \infty$. Therefore,
\begin{align}\label{slim}
\lim_{n\to\infty}\|T_nx\|=\|Sx\|\,.
\end{align}
  On the other hand, $\lim_{n\to \infty}\|T_{n_k}x\|=\|Px\|$, which ensures that $\|Px\|=\|Sx\|$. {\color{blue}Now, the weakly convergence of} $\{T_nx\}$ and \eqref{slim} complete the proof.}
\end{proof}

{\color{blue}\begin{remark}
For a finite set $J$, Amemiya and Ando \cite{AA} proved that the sequence $\{T_nx\}$ converges weakly to $Px$. For an infinite $J$, additional conditions may be required; see \cite{PUS}.
\end{remark}}

Our next main result is as follows.

\begin{theorem}\label{thstrong}
	Let $r$ be a positive integer and let $P_1,\ldots, P_r\in \mathbb{B}(\mathscr{H})$ be distinct projections such that {\color{blue}$c_b(\mathcal{R}(P_1), \ldots, \mathcal{R}(P_r))<1$}. Let $\{P_i\}_{i=r+1}^\infty$ be a monotonically decreasing sequence of (not necessarily distinct) projections such that $\bigcap_{i=1}^r\mathcal{R}(P_i)=\bigcap_{i=1}^\infty\mathcal{R}(P_i)$.
	Let $\sigma:\mathbb{N}\to\mathbb{N}$ be pseudo-periodic. Set $T_1:=P_{\sigma(1)}$ and $T_n:=P_{\sigma(n)}T_{n-1}$. Then the sequence $\{T_nx\}$ strongly converges to $Px$, where $P$ is the projection onto $\bigcap_{i=1}^\infty\mathcal{R}(P_i)$. 	
\end{theorem}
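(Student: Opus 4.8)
The plan is to combine the weak convergence already supplied by Theorem \ref{mainth} with Lemma \ref{lemma subsequence}: since $\{T_nx\}$ converges weakly to $Px$, it suffices to exhibit one subsequence that converges to $Px$ \emph{in norm}. I would use the subsequence indexed by the block ends, namely $\{T_{k_{n+1}-1}x\}_n$, where the $k_n$ are the positions carrying the ``infinite'' indices $\Gamma_\infty=\{r+1,r+2,\ldots\}$. The point of this choice is that on the open block $(k_n,k_{n+1})$ the function $\sigma$ takes values only in $\{1,\dots,r\}$, so
$$T_{k_{n+1}-1}=W_nT_{k_n},\qquad W_n:=P_{\sigma(k_{n+1}-1)}\cdots P_{\sigma(k_n+1)},$$
and $W_n$ is a word built solely from $P_1,\dots,P_r$. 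Two features of a pseudo-periodic $\sigma$ drive the estimate: the block length $L_n:=k_{n+1}-k_n-1$ tends to infinity (the gaps $k_n-k_{n-1}$ increase), and, by the window property in Definition \ref{pse}, every $m$ consecutive positions already exhaust $\{1,\dots,r\}$.

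Before estimating I would record two elementary identities that reduce everything to a distance. Writing $\mathscr{M}:=\bigcap_{i=1}^r\mathscr{M}_i$, the hypothesis $\bigcap_{i=1}^r\mathcal{R}(P_i)=\bigcap_{i=1}^\infty\mathcal{R}(P_i)$ guarantees that $P$ is exactly the orthogonal projection onto $\mathscr{M}$. Since every $P_j$ with $\mathscr{M}\subseteq\mathscr{M}_j$ fixes $\mathscr{M}$ pointwise, one checks $PT_m=P$ and $PW_n=P$, whence $T_mx-Px=(I-P)T_mx$ and therefore
$$\|T_mx-Px\|=\mathrm{dist}(T_mx,\mathscr{M})\qquad(m\in\mathbb{N}).$$
Thus the theorem is equivalent to proving $\mathrm{dist}(T_{k_{n+1}-1}x,\mathscr{M})\to0$, and since $(I-P)W_n=W_n-P$, this distance equals $\|(W_n-P)T_{k_n}x\|\le\|W_n-P\|\,\|x\|$.

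The heart of the proof is therefore the uniform geometric contraction $\|W_n-P\|\to0$, and this is exactly where $c_b(\mathcal{R}(P_1),\dots,\mathcal{R}(P_r))<1$ is indispensable. I would first use Lemma \ref{lemmclsed} and Remark \ref{remark1}: for any word $V=P_{j_s}\cdots P_{j_1}$ whose letters exhaust $\{1,\dots,r\}$ one has $\bigcap_t\mathscr{M}_{j_t}=\mathscr{M}$ and $\|V-P\|=\|VP_{\mathscr{M}^\perp}\|=c_b(\mathscr{M}_{j_1},\dots,\mathscr{M}_{j_s})<1$. Cutting the block into $q_n:=\lfloor L_n/m\rfloor$ consecutive windows of length $m$ (plus a harmless remainder $R_n$ of norm $\le1$), each window $V^{(i)}$ is such a covering word, so its factor is at most $\theta$, the maximum of $c_b(\cdots)$ over the finitely many length-$m$ words over $\{1,\dots,r\}$ that contain all letters; hence $\theta<1$. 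Using that each $V^{(i)}$ fixes $\mathscr{M}$, so that $P(V^{(i)}-P)=(V^{(i)}-P)P=0$, the product telescopes as $W_n-P=R_n(V^{(q_n)}-P)\cdots(V^{(1)}-P)$, which yields $\|W_n-P\|\le\theta^{q_n}$. Since $L_n\to\infty$ forces $q_n\to\infty$, we obtain $\|W_n-P\|\to0$, hence $\mathrm{dist}(T_{k_{n+1}-1}x,\mathscr{M})\to0$, i.e. $T_{k_{n+1}-1}x\to Px$ strongly; Lemma \ref{lemma subsequence} then upgrades the weak convergence of $\{T_nx\}$ to strong convergence.

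I expect the main obstacle to be the passage from the single-cycle angle condition $c_b<1$ to a contraction factor $\theta<1$ that is uniform across all of the block words $W_n$: one must argue that only finitely many length-$m$ covering words can occur, invoke Remark \ref{remark1} to keep each of their angles strictly below $1$, and verify the telescoping identity for the heterogeneous product $(V^{(q_n)}-P)\cdots(V^{(1)}-P)$. It is worth emphasizing that the monotone decreasing projections $\{P_i\}_{i>r}$ and Lemma \ref{lemma p} enter only through Theorem \ref{mainth} (the weak convergence); the strong convergence of the chosen subsequence is produced entirely by the angle condition together with the hypothesis $\bigcap_{i=1}^r\mathcal{R}(P_i)=\bigcap_{i=1}^\infty\mathcal{R}(P_i)$, which is precisely what aligns the contraction limit of the finite word $W_n$ with the global projection $P$.
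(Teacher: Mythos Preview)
Your proposal is correct and rests on the same three pillars as the paper's proof: weak convergence from Theorem~\ref{mainth}, a geometric contraction on the blocks $(k_n,k_{n+1})$ coming from the angle condition $c_b<1$ via Remark~\ref{remark1}, and the upgrade to strong convergence through Lemma~\ref{lemma subsequence}. The core estimate is literally the same: the paper also bounds $\|Px-T_{k_{j_0+1}-1}x\|\le c^{\nu}\|x\|$ by chopping the block word into length-$m$ covering subwords.

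Where you differ is in packaging. By taking the subsequence at the block \emph{ends}, you write $T_{k_{n+1}-1}x-Px=(W_n-P)T_{k_n}x$ and pass directly to $\|W_n-P\|\le\theta^{q_n}$ via the clean telescoping identity $W_n-P=R_n(V^{(q_n)}-P)\cdots(V^{(1)}-P)$. The paper instead targets the subsequence $\{T_{k_n}x\}$ and first builds up, through Lemma~\ref{lemma sakae} and a double telescoping summation, the inequality
\[
\|Px-T_{k_{i_0}}x\|^2\le 2\|Px-T_{k_{j_0+1}-1}x\|^2+2\|T_{k_{j_0+1}}x-T_{k_{j_0+1}-1}x\|^2+2(\|T_{k_{i_0}}x\|^2-\|T_{k_{j_0}}x\|^2),
\]
and only then inserts the same contraction bound on the first term and lets $j_0\to\infty$, then $i_0\to\infty$. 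Your route bypasses Lemma~\ref{lemma sakae} entirely and is the more economical of the two; the paper's detour is correct but not needed once one notices, as you did, that the block-end iterate is already within $\theta^{q_n}\|x\|$ of $Px$.
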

\begin{proof}
Let $x\in\mathscr{H}$. According to Theorem \ref{mainth}, the sequence $\{T_nx\}$ weakly converges to $Px$. Fix $i_0<j_0$ be positive integers. For each positive integer $i$ with $i_0\leq i \leq j_0$ and each positive integer $t$ (depending on $i$) with {\color{blue} $k_{i}\leq t \leq k_{i+1}$, Lemma \ref{lemma sakae} with $Q=P_{t+1}$ ensures that 
\begin{align}\label{eq000}
	\|Px-T_{t}x\|^2&\leq\|Px-T_{t+1}x\|^2+\|Px-P_{t+1}Px\|^2+2\|T_{t}x-T_{t+1}x\|^2 \nonumber\\
	&=\|Px-T_{t+1}x\|^2+2\|T_{t}x-T_{t+1}x\|^2\,.
\end{align}
since $P_jP=PP_j=P$ for all $j$.
Adding inequalities \eqref{eq000} for all $k_{i}\leq t \leq k_{i+1}-1$, gives us:}
\begin{align}\label{mox5}
	\|Px-T_{k_{i}}x\|^2\leq\|Px-T_{k_{i+1}}x\|^2+2\sum_{t=k_{i}}^{k_{i+1}-1}\|T_{t}x-T_{t+1}x\|^2 
\end{align}
for each fixed $i_0\leq i \leq j_0$. Summing up inequalities \eqref{mox5} over all $i_0\leq i \leq j_0$, we reach

\begin{align}\label{eq0000}
\|Px-T_{k_{i_0}}x\|^2&\leq\|Px-T_{k_{j_0+1}}x\|^2+2\sum_{t=k_{i_0}}^{k_{{j_0+1}}-1}\|T_{t}x-T_{t+1}x\|^2\nonumber\\
&{\color{blue}=\|Px-T_{k_{j_0+1}}x\|^2+2\sum_{t=k_{i_0}}^{k_{{j_0+1}}-1}(\|T_{t}x\|^2-\|T_{t+1}x\|^2)}\nonumber\\
&= \|Px-T_{k_{j_0+1}}x\|^2+2(\|T_{k_{i_0}}x\|^2-\|T_{k_{j_0+1}}x\|^2)\nonumber\\
&\leq 2\|Px-T_{k_{j_0+1}-1}x\|^2+2\|T_{k_{j_0+1}}x-T_{k_{j_0+1}-1}x\|^2\nonumber\\
& \quad +2(\|T_{k_{i_0}}x\|^2-\|T_{k_{j_0}}x\|^2),
\end{align}
where we use the parallelogram law to get the last inequality.

Note that $\|P_r \cdots P_1 (I-P)\|<1$ and $Q_1\cdots Q_m$ is a product of $m\geq r$ projections such that $\{Q_1, \ldots, Q_m\}=\{P_1,\ldots, P_r\}$. Remark \ref{remark1} implies that $\|Q_1\cdots Q_m (I-P)\|<1$. 
Hence, there exists a constant $c<1$ such that $\|P_{\sigma(n+m)}\ldots P_{\sigma(n+1)}(I-P)\|<c$ whenever $k_{j_0}<n+1< \cdots <n+m<k_{j_0+1}$. Therefore,
\begin{align*}
\|Px-T_{k_{j_0+1}-1}x\|&=\|PT_{k_{j_0}}x-T_{k_{j_0+1}-1}x\|\\
&{\color{blue}= \|(P_{k_{j_0+1}-1}P_{k_{j_0+1}-2}\cdots P_{k_{j_0}+1}-P)T_{k_{j_0}}x\|}\\
&= \|(P_{k_{j_0+1}-1}P_{k_{j_0+1}-2}\cdots P_{k_{j_0}+1}({\color{blue}I}-P))T_{k_{j_0}}x\|\\
&\leq \|P_{k_{j_0+1}-1}P_{k_{j_0+1}-2}\cdots P_{k_{j_0+1}-m}({\color{blue}I}-P)\|\\
&\quad \times \|P_{k_{j_0+1}-m-1}\cdots P_{k_{j_0}-2m}({\color{blue}I}-P)\|\ldots \\
&\quad \times \|P_{k_{j_0}+s+2m}\cdots P_{k_{j_0}+s+m+1}({\color{blue}I}-P)\|\qquad(0\leq s\leq m-1 ) \\
&\quad \times \|P_{k_{j_0}+m+s}\cdots P_{k_{j_0}+1}\|\|T_{k_{j_0}}x\|\\
&\leq c^\nu\|x\|,
\end{align*}
{\color{blue}for some $\nu$, where $m$ is given in the definition of quasi-periodic. So, by \eqref{eq0000} we get 
 \begin{equation}
\|Px-T_{k_{i_0}}x\|^2\leq 2c^\nu\|x\|^2+2\|T_{k_{j_0+1}}x-T_{k_{j_0+1}-1}x\|^2 +2(\|T_{k_{i_0}}x\|^2-\|T_{k_{j_0}}x\|^2).
	\end{equation}
	Since $\{k_n-k_{n-1}\}_{n=1}^\infty$ is an increasing sequence, we have $\nu \to \infty$ if $j_0\to \infty$. Since  $0<c<1$, it  follows from  \eqref{conv} and  \eqref{eq01} that $\lim_{n\to\infty}\|T_{k_n}x-Px\|=0$. Now, the result is obtained by utilizing Lemma \ref{lemma subsequence}.}
\end{proof}

The next example illustrates Theorem \ref{thstrong}.
\begin{example}
	Let $\mathscr{H}$ be a separable Hilbert space with $\{e_i:i\in \mathbb{N}\}$ as its orthonormal basis. Let 
	\[
	\mathscr{M}_1=\mathrm{span}\{e_{2k-1}:k\in \mathbb{N}\}, \quad \mathscr{M}_2=\mathrm{span}\left\{\frac{e_{2k-1}+e_{2k}}{2}:k\in \mathbb{N}\right\},\] and
	\[\mathscr{M}_i=\mathrm{span}\{e_{3j}:j\geq i-2\}\,, 
	\]
	for $i\geq 3$. Let $P_i$ be the projection onto $\mathscr{M}_i$ for each $i\geq 1$. It is easy to verify that $\mathscr{M}_1+\mathscr{M}_2=\mathscr{H}$ is closed. Therefore, $\mathscr{M}_1^\perp +\mathscr{M}_2^\perp=(\mathscr{M}_1\cap\mathscr{M}_2)^\perp=\mathscr{H}$ is closed (see \cite[Lemma 11]{DEU1}). In addition, $\mathscr{M}_1\cap \mathscr{M}_2=\{0\}=\bigcap_{i=3}^\infty\mathscr{M}_i$. It follows from Lemma \ref{lemmclsed} that
	\[
	c_b(\mathscr{M}_1,\mathscr{M}_2)<1\,. 
	\]
	Let $P_i$ be the projection onto $\mathscr{M}_i$ for each $i\geq 1$. Let $\sigma:\mathbb{N}\to\mathbb{N}$ be pseudo-periodic with $\Gamma_F=\{1,2\}$. Set $T_1:=P_{\sigma(1)}$ and $T_n:=P_{\sigma(n)}T_{n-1}$. From Theorem \ref{thstrong} we conclude that the sequence $\{T_nx\}$ strongly converges to $0$.
\end{example}

 \medskip
\section*{Disclosure statements}

\medskip
\noindent \textit{Author Contributions Statement.} Both Eskandari and Moslehian wrote the main manuscript text and they edited and reviewed the manuscript.

\medskip
\noindent \textit{Conflict of Interest Statement.} On behalf of the authors, the corresponding author states that there is no conflict of interest. 

\medskip
\noindent\textit{Data Availability Statement.} Data sharing is not applicable to this article as no datasets were generated or analysed during the current study.

\medskip
\noindent \textit{Funding Declaration.} No funding was received. 
\medskip


\begin{thebibliography}{99}

\bibitem{AA} I. Amemiya and T. Ando, \textit{Convergence of random products of contractions in Hilbert	space}, Acfa Sci. Math. (Szeged) \textbf{26} (1965), 239--244.
	
\bibitem{BAD} C. Badea, S. Grivaux, and V. M\"{u}ller, \textit{The rate of convergence in the method of alternating projections},translated from Algebra i Analiz 23 (2011), no. 3, 1--30 St. Petersburg Math. J. 23 (2012), no. 3, 413--434.


\bibitem{BAU} H. H. Bauschke, J. M. Borwein, and A. S. Lewis, \textit{The method of cyclic projections for closed convex sets in Hilbert space}, Recent developments in optimization theory and nonlinear analysis (Jerusalem, 1995), 1--38, Contemp. Math., 204, Amer. Math. Soc., Providence, RI, 1997.
	

\bibitem{DEU} F. Deutsch, \textit{Best approximation in inner product spaces}, CMS Books in Mathematics/Ouvrages de Math\'{e}matiques de la SMC, 7. Springer-Verlag, New York, 2001.

\bibitem{DEU1} F. Deutsch, \textit{The angle between subspaces of a Hilbert space}, Approximation theory, wavelets and applications (Maratea, 1994), 107--130, NATO Adv. Sci. Inst. Ser. C: Math. Phys. Sci., 454, Kluwer Acad. Publ., Dordrecht, 1995.

\bibitem{DYE} J. M. Dye and S. Reich, \textit{On the unrestricted iteration of projections in Hilbert space}, J. Math. Anal. Appl. \textbf{156} (1991), no. 1, 101--119.

{\color{blue}\bibitem{DYE2} J. M. Dye and S. Reich, \textit{Unrestricted iterations of nonexpansive mappings in Hilbert space}, Nonlinear Anal. \textbf{18} (1992), no. 2, 199--207.}

\bibitem{N} I. Halperin, \textit{The product of projection operators}, Acta Sci. Math \textbf{23} (1962), no. 2, 96--99.

\bibitem{KM} E. Kopeck\'{a} and V. M\"{u}ller, \textit{A product of three projections}, Studia Math. \textbf{223} (2014), no. 2, 175--186.

\bibitem{KP} E. Kopeck\'{a} and A. Paszkiewicz, \textit{Strange products of projections}, Israel J. Math. \textbf{219} (2017), no. 1, 271--286. 

{\color{blue}\bibitem{KR1} E. Kopeck\'{a} and S. Reich, \textit{A note on the von Neumann alternating projections algorithm}, J. Nonlinear Convex Anal. \textbf{5} (2004), no. 3, 379--386. 

\bibitem{KR2} E. Kopeck\'{a} and S. Reich, \textit{Another note on the von Neumann alternating projections algorithm}, J. Nonlinear Convex Anal. \textbf{11} (2010), no. 3, 455--460.}

\bibitem{MDD} \'{I}. D. L. Melo, J. X. da Cruz Neto, and J. M. Machado de Brito, \textit{Strong convergence of alternating projections}, J. Optim. Theory Appl. \textbf{194} (2022), no. 1, 306--324.

\bibitem{MUR} G. Murphy, \textit{$C^*$-Algebras and Operator Theory}, Academic Press, San Diego, 1990. 

\bibitem{PAS} A. Paszkiewicz, \textit{The Amemiya--Ando conjecture falls}, arXiv:1203.3354.

\bibitem{PRA} M. Pr\'{a}ger, \textit{\"{U}ber ein Konvergenzprinzip im Hilbertschen Raum}, Czechoslovak Math. J. \textbf{10} (1960), 2714--282 (in Russian).

\bibitem{PUS} E. Pustylnik, S. Reich, and A. J. Zaslavski, \textit{Convergence of non-periodic infinite products of orthogonal projections and nonexpansive operators in Hilbert space}, J. Approx. Theory \textbf{164} (2012), no. 5, 611--624.

{\color{blue}\bibitem{PUS2} E. Pustylnik, S. Reich, and A. J. Zaslavski, \textit{Convergence of non-cyclic infinite products of operators}, J. Math. Anal. Appl. \textbf{380} (2011), no. 2, 759--767.

\bibitem{PUS3}E. Pustylnik, S. Reich and A. J. Zaslavski, \textit{Inner inclination of subspaces and infinite products of orthogonal projections}, J. Nonlinear Convex Anal. 14 (2013), 423--436}

\bibitem{SAK} M. Sakai, \textit{Strong convergence of infinite products of orthogonal projections in Hilbert space}, Appl. Anal. \textbf{59} (1995), 109--120.

{\color{blue}\bibitem{THI} D. K. Thimm, \textit{Most iterations of projections converge}, J. Optim. Theory Appl. \textbf{203} (2024), no. 1, 285--304.

\bibitem{THI2} D. K. Thimm, \textit{On a meager full measure subset of $N$-ary sequences}, Appl. Set-Valued Anal. Optim. \textbf{6} (2024), 81--86.}

\bibitem{VON} J. von Neumann, \textit{On rings of operators. Reduction theory}, Ann. of Math. \textbf{50} (1949), 401--485.


\end{thebibliography}
	\end{document}